\theoremstyle{plain}
\newtheorem{theorem}{Theorem}
\newtheorem{prop}[theorem]{Proposition}
\newtheorem{conj}[theorem]{Conjecture}
\newtheorem*{theorem*}{Theorem}
\newtheorem*{lemma*}{Lemma}
\theoremstyle{definition}
\newcommand{\R}{\mathbb{R}}
\newcommand{\Z}{\mathbb{Z}}
\newcommand{\Q}{\mathbb{Q}}
\newcommand{\co}{\mathcal{O}}
\newcommand{\wt}{\widetilde}
\newcommand{\disc}{\mathrm{disc}}
\DeclareMathOperator\Gal{Gal}
\DeclareMathOperator\Tr{Tr}
\DeclarePairedDelimiterX\set[2]\lbrace\rbrace{\,#1\mathclose{}:\mathopen{}#2\,}
\title{Number fields
	without universal quadratic forms\\
	of small rank exist in most degrees}
\author{V\' \i t\v ezslav Kala}
\address{Charles University, Faculty of Mathematics and Physics, Department of Algebra, Sokolov\-sk\' a 49/83, 18675 Praha~8, Czech Republic}
\email{kala@karlin.mff.cuni.cz, vita.kala@gmail.com}
\thanks{
	The author was supported by Czech Science Foundation GA\v CR, grant 21-00420M, and by Charles University,  projects PRIMUS/20/SCI/002 and UNCE/SCI/022}
\keywords{universal quadratic form, quadratic lattice, totally real number field}
\subjclass[2010]{11E12, 11E20, 11R21, 11R80}
\begin{document}
	
\maketitle

\begin{abstract}
	We prove that in each degree divisible by 2 or 3, there are infinitely many totally real number fields that require universal quadratic forms to have arbitrarily large rank.
\end{abstract}

\section{Introduction}\label{s:1}

In 1770 Lagrange proved that every positive integer is the sum of four squares, opening up the study of universal quadratic forms. These were then first investigated over the integers $\Z$, leading to the celebrated 15- and 290-theorems \cite{Bh, BH}, and also over number fields, starting with Maa\ss\ \cite{Ma} and Siegel \cite{Si3} in the 1940s. To be precise, let $\co_F$ be the ring of integers in a totally real number field $F$. A totally positive quadratic form $Q$ with $\co_F$-coefficients is \textit{universal} over $F$ if it represents all the totally positive elements of $\co_F$.

Universal forms exist over every $F$ thanks to the weak local-global principle \cite{HKK}. Of particular interest is thus the smallest possible rank $m'(F)$ of a universal form over $F$. 
E.g., we have $m'(\Q)=4$ by the four square theorem. Among real quadratic fields, $m'(\Q(\sqrt D))=3$ for $D=2,3,5$, and these are the only real quadratic fields that admit a ternary universal form that is moreover classical (i.e., has all its cross-terms divisible by $2$) \cite{CKR}. 
This provides interesting evidence towards Kitaoka's conjecture that there are only finitely many number fields $F$ with $m'(F)=3$.

Further, the ranks $m'(\Q(\sqrt D))$ can be arbitrarily large \cite{BK}, \cite{Ka}, ditto for multiquadratic fields of a given degree \cite{KS}. Despite a number of other exciting results obtained in the last 25 years \cite{BK2, CL+, EK, KT, KY, Ki2, KKP, KTZ, Ya},
ranks of universal forms over number fields, especially of higher degree, remain mysterious.

The aim of this short note is to extend the previous special results on unbounded ranks $m'(F)$ to number fields of most degrees:

\begin{theorem}\label{main theorem} Let $d,m$ be positive integers such that $d$ is divisible by $2$ or $3$.
	Then there are infinitely many totally real number fields $F$ of degree $[F:\Q]=d$ over which every  universal quadratic form has rank at least $m$.
\end{theorem}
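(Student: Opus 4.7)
The natural approach is a descent argument: for each admissible $d$, one exhibits a subfield $K \subseteq F$ of degree $2$ or $3$ for which $m'(K)$ is already known to be large, then transfers that bound upward via a lifting lemma. The condition that $d$ is divisible by $2$ or $3$ enters precisely because we need such a subfield. For the base of the descent, \cite{BK, Ka} give, for every $M$, infinitely many real quadratic fields $K$ with $m'(K) \geq M$; when $3 \mid d$ but $2 \nmid d$, we need the analogous statement for totally real cubic fields, which one can extract from recent work on an explicit one-parameter family (e.g., Shanks' simplest cubics) or by adapting the Blomer--Kala construction.

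The key technical step is the following \emph{lifting lemma}: if $F/K$ is a totally real extension of degree $n$ with $\co_F$ free over $\co_K$, then $n \cdot m'(F) \geq m'(K)$. Given a universal form $Q$ of rank $r$ over $F$, fix a $\co_K$-basis $\omega_1,\dots,\omega_n$ of $\co_F$ and a totally positive element $\delta$ of the inverse different of $F/K$. Expanding $Q$ in new variables $y_{ij}$ defined by $x_i = \sum_j y_{ij}\omega_j$ and applying $\Tr_{F/K}(\delta\cdot{-})$ produces a totally positive $\co_K$-valued quadratic form of rank $rn$; with $\delta$ chosen appropriately, this new form represents every totally positive element of $\co_K$, yielding $rn \geq m'(K)$.

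To realize $F$, write $d = n[K:\Q]$ with $[K:\Q] \in \{2,3\}$, pick $K$ from the base step with $m'(K) \geq mn$, and produce infinitely many totally real degree-$n$ extensions $F/K$. One concrete source: set $F = KL$, where $L$ ranges over totally real degree-$n$ fields linearly disjoint from $K$ with $\disc(L)$ coprime to $\disc(K)$, so that $\co_F = \co_K \otimes_\Z \co_L$ is free over $\co_K$. Infinitely many such $L$ exist by standard constructions (e.g.\ suitable subfields of cyclotomic fields). Combining with the lifting lemma, $m'(F) \geq m'(K)/n \geq m$, as required.

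The main obstacle I foresee is the lifting lemma: the naive trace form $\Tr_{F/K}(\delta\, Q(x))$ represents only $\Tr(\delta)\cdot\co_K^+$, so producing a form that represents \emph{every} totally positive element of $\co_K$ demands either a subtle choice of $\delta$ in the inverse different, or a supplementary argument adjoining a bounded number of rank-$1$ summands to cover the missing residue classes. If no off-the-shelf reference for totally real cubics with arbitrarily large $m'$ is available, the base step likewise requires its own construction along the lines of \cite{BK, Ka}, which is a secondary difficulty.
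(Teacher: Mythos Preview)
Your lifting lemma is the crux, and the obstacle you yourself flag is fatal as stated. For the traced form $Q'(y)=\Tr_{F/K}(\delta\,Q(x))$ to be universal over $K$ it suffices that $\Tr_{F/K}(\delta)=1$ with $\delta\succ 0$ in $\mathfrak d_{F/K}^{-1}$; but the existence of such a $\delta$ is a genuine arithmetic condition on $F/K$ (essentially that the relative codifferent contain a totally positive element of minimal trace~$1$), and it is \emph{not} known to hold for infinitely many totally real extensions of a prescribed degree over a prescribed $K$. Your alternative fix, adjoining ``a bounded number of rank-$1$ summands'', does not work either: the set $\co_K^+\setminus\Tr_{F/K}(\delta\,\co_F^+)$ need not be finite, and even if a finite supplement sufficed, the number of extra variables would in general depend on $F$, not just on $n=[F:K]$, destroying the inequality $m'(K)\le n\,m'(F)$. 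So as written the descent does not go through, and no routine repair is visible.

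The paper takes a completely different route that sidesteps any trace-down. It fixes the \emph{small} field $L$ (quadratic or simplest cubic) together with explicit elements $a_1,\dots,a_n\in\co_L^+$ whose Gram matrix in any representing lattice over $L$ already has rank $\ge m$; then it composites with a degree-$k$ field $K$ whose Galois closure has group $S_k$ and whose discriminant is large and coprime to $\disc_L$. Given a universal $\co_{KL}$-lattice and vectors $v_i$ with $Q(v_i)=a_i$, one uses a Galois-theoretic classification of the intermediate fields $\Q\subset M\subset KL$ (they either contain $K$ or are contained in $L$) together with Cauchy--Schwarz $4a_ia_j\succeq b_{ij}^2$ and Schur's trace inequality $\Tr_{M/\Q}\beta^2\ge c_{[M:\Q]}\Delta_{M/\Q}(\beta)^{2/([M:\Q]^2-[M:\Q])}$ to force every off-diagonal entry $b_{ij}=2B(v_i,v_j)$ to lie in $L$: if $\Q(b_{ij})\supset K$ then $\Delta(b_{ij})$ is bounded below by a power of $\disc_K$, which for $\disc_K$ large contradicts the trace bound. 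Thus the whole Gram matrix lives over $L$, and the rank bound over $L$ transfers directly to $KL$ with no loss of a factor $[KL:L]$. This is the key idea you are missing.
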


If $d=2$, this was proved by the author \cite[Theorem 1.1]{Ka}. The key idea was to use continued fractions to construct quadratic fields that have many indecomposable elements, which are hard to represent by a quadratic form.
Constructing such elements in higher degrees is more difficult, nevertheless, the author and Svoboda \cite[Theorem 1]{KS} extended the result to all degrees $d=2^h$. In the cubic case $d=3$, this theorem was proved by Yatsyna \cite[Theorem]{Ya} using interlacing polynomials and elements of trace one.

\medskip

Our argument will use Schur's trace bound \cite{Sch} to show a general Theorem \ref{general theorem}: in certain cases, suitable elements from a cyclic number field $L$ force a quadratic form that represents them to have many variables, even in an overfield. We will then prove Theorem \ref{main theorem} by choosing $L$ to be a real quadratic, or simplest cubic \cite{Sh} number field.

Theorem \ref{main theorem} also holds for quadratic lattices that are not necessarily free; in fact, we will formulate the rest of the article in lattice-theoretic language.
Also note that we do not assume the quadratic forms to be classical, although this is a very common assumption and there are only very few results available without it (e.g., \cite{De}).

\medskip

Finally, at the end of the paper we will observe that Theorem \ref{main theorem} for \textit{all} degrees $d>1$ would follow if we knew that: In each prime degree $p\geq 5$, there are infinitely many cyclic totally real number fields that have a power integral basis and units of all signatures.

\section*{Acknowledgments}

I thank Giacomo Cherubini, Ari Shnidman, B\' ara T\'\i \v zkov\' a, and Pavlo Yatsyna for our interesting and helpful discussions.

\section{Preliminaries}\label{s:2}

Let $F$ be a totally real number field of degree $[F:\Q]=N$ over $\Q$, i.e., there are $N$ real embeddings $\sigma_1,\dots,\sigma_N:F\rightarrow \R$. We denote the ring of algebraic integers $\co_F$. An element $\alpha\in F$ is \textit{totally positive} (denoted $\alpha\succ0$) if $\sigma_i(\alpha)>0$ for all $1\leq i\leq N$. 
Further, $\alpha\succeq\beta$ if $\alpha-\beta\succ 0$ or $\alpha=\beta$.
The set of all totally positive algebraic integers is $\co_F^+$.

For $\alpha\in F$ we have its \textit{trace} $\Tr_{F/\Q}(\alpha)=\sum_{1\leq i\leq N}\sigma_i(\alpha)$ and \textit{discriminant} $\Delta_{F/\Q}(\alpha)$, which is the square of the determinant of the matrix $(\sigma_i(\alpha^{j-1}))_{1\leq i,j\leq N}$.
The \textit{discriminant of $F$}, i.e., the discriminant of an integral basis for $\co_F$, will be denoted $\disc_F$. We have $\disc_F\mid\Delta_{F/\Q}(\alpha)$ for each $\alpha\in\co_F$.

\medskip

A \textit{totally positive quadratic $\co_F$-lattice of rank $r$} (an \textit{$\co_F$-lattice} for short) is a pair $(\Lambda,Q)$, where $\Lambda$ is a finitely generated $\co_F$-submodule of $F^r$ such that $\Lambda F=F^r$, $Q:F^r\rightarrow F$ is a quadratic form, and $Q(v)\in\co_F^+$ for all $v\in \Lambda, v\neq 0$. 
We also have the attached symmetric bilinear form $B(v,w)=(Q(v+w)-Q(v)-Q(w))/2$.
An $\co_F$-lattice $(\Lambda,Q)$ is \textit{universal} (over $F$) if for each $\alpha\in\co_F^+$ there is $v\in\Lambda$ with $Q(v)=\alpha$.

Let $m(F)$ denote the minimal rank of a universal $\co_F$-lattice. Note that to each quadratic form $Q$ (as considered in the Introduction) corresponds the $\co_F$-lattice $(\co_F^r,Q)$, and so $m(F)\leq m'(F)$ (it is interesting to note that no example of strict inequality is known here).

\medskip

Take non-zero vectors $v_1,\dots,v_n\in\Lambda$. The corresponding \textit{Gram matrix} is the $n\times n$ matrix 
$A=(B(v_i,v_j))_{1\leq i,j\leq n}$.
Note that we have $B(v_i,v_i)=Q(v_i)=a_i$ and $B(v_i,v_j)=b_{ij}/2$ for all $i\neq j$ and suitable $a_i, b_{ij}\in\co_{F}$.
As the lattice $\Lambda$ is totally positive, $a_i\succ 0$ and
we have a version of \textit{Cauchy--Schwarz inequality}
$4a_ia_j \succeq b_{ij}^2$ for all $i\neq j$, equivalently, $Q(v_i)Q(v_j)\succeq B(v_i,v_j)^2$ (this quickly follows from the positive-definiteness of the quadratic form $\sigma_h(Q)$ on $\R^r$ for $h=1,\dots,N$).

Also note that the rank of $A$ (as a matrix over the field $F$) is at most the rank $r$ of the lattice $\Lambda$
(for the rank of $A$ equals the rank of the $\co_F$-sublattice of $\Lambda$ spanned by $v_1,\dots,v_n$). For more background on quadratic lattices, see \cite{O1}.

\medskip

Further, we will crucially use the following lower bound due to Schur.

\begin{prop}[{\cite[\S 2.II.]{Sch}}]\label{prop:schur}
Let $F$ be a totally real number field of degree $[F:\Q]=N$.

If $\beta\in\co_F$, then
	$$\Tr_{F/\Q} \beta^2\geq c_{N}\Delta_{F/\Q}(\beta)^{2/(N^2-N)},\text{ where }c_{N}=\frac{N^2-N}{\left(2^2\cdot3^3\cdot 4^4\cdots(N-1)^{N-1}\cdot N^N\right)^{2/(N^2-N)}}.$$
\end{prop}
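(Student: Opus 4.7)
Let $\beta_1,\dots,\beta_N=\sigma_1(\beta),\dots,\sigma_N(\beta)$ denote the real conjugates of $\beta$, so that $\Tr_{F/\Q}\beta^2=\sum_i\beta_i^2$ and $\Delta_{F/\Q}(\beta)=\prod_{i<j}(\beta_i-\beta_j)^2=(\det V)^2$, where $V=(\beta_i^{j-1})_{1\le i,j\le N}$ is the Vandermonde matrix built from the conjugates. The inequality reduces to a purely real-variable statement about an arbitrary $N$-tuple of reals. Since the right-hand side $\Delta_{F/\Q}(\beta)$ is invariant under the translation $\beta_i\mapsto \beta_i-c$, while the left-hand side $\sum_i(\beta_i-c)^2$ is minimized by $c=\frac1N\sum_i\beta_i$, I may assume from the outset that $\sum_i\beta_i=0$.

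The main step is an application of Hadamard's inequality to the positive semidefinite Gram matrix $V^{\top}V$, whose $(j,k)$ entry is the power sum $\sum_i\beta_i^{j+k-2}=\Tr_{F/\Q}\beta^{\,j+k-2}$:
\[
\Delta_{F/\Q}(\beta)=\det(V^{\top}V)\le \prod_{j=1}^{N}\Tr_{F/\Q}\beta^{\,2(j-1)}.
\]
To pass to the right-hand side of the proposition, I would then estimate each higher power sum in terms of $\Tr_{F/\Q}\beta^2$. Writing $y_i:=\beta_i^2\ge 0$, the elementary term-by-term bound $\sum_i y_i^j\le \bigl(\sum_i y_i\bigr)^j$ (true for $j\ge 1$ by multinomial expansion) gives $\Tr_{F/\Q}\beta^{\,2j}\le (\Tr_{F/\Q}\beta^{\,2})^j$, and hence
\[
\Delta_{F/\Q}(\beta)\le N\cdot (\Tr_{F/\Q}\beta^{\,2})^{N(N-1)/2}.
\]
Taking the $\bigl(\tfrac{2}{N(N-1)}\bigr)$-th power yields an inequality of exactly the shape of the proposition.

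The hard part is obtaining the sharp constant $c_N$ of the proposition, whose denominator involves the hyperfactorial $\prod_{k=1}^{N}k^k$; the naive computation above only gives the weaker constant $N^{-2/(N^2-N)}$. To sharpen it, I would exploit the fact that $\det V$ is unchanged under column operations, and hence I can replace the monomial basis by any family of monic polynomials $\phi_0,\dots,\phi_{N-1}$ with $\deg \phi_k=k$, and then balance the Hadamard product $\prod_j \sum_i \phi_{j-1}(\beta_i)^2$ against the constraint $\sum_i\beta_i^2=\Tr_{F/\Q}\beta^{\,2}$. The optimal $\phi_k$ in Schur's original argument produce precisely a factor $k^k$ at level $k$, and their product assembles into the hyperfactorial. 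The genuine obstacle in the proof is therefore this delicate choice of basis and the ensuing bookkeeping; as the proposition is cited from \cite[\S 2.II.]{Sch}, I would defer to Schur's original calculation rather than reinvent it.
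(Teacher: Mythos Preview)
Your plan is not wrong, but it is much more circuitous than the paper's proof and ultimately lands on the same citation.  The paper does not attempt to re-derive Schur's inequality at all: it simply quotes the original statement---that real numbers $x_1,\dots,x_N$ with $\sum_i x_i^2\le 1$ satisfy $\prod_{i<j}(x_i-x_j)^2\le c_N^{-(N^2-N)/2}$---and then substitutes $x_i=\sigma_i(\beta)/(\Tr_{F/\Q}\beta^2)^{1/2}$.  This homogeneous rescaling is the entire argument; three lines.

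Your route instead tries to reprove Schur's bound from scratch via Hadamard's inequality on the Gram matrix $V^{\top}V$.  That part is valid and yields the correct shape $\Delta_{F/\Q}(\beta)\le N\,(\Tr_{F/\Q}\beta^{2})^{N(N-1)/2}$, but, as you acknowledge, only with the cruder constant $N^{-2/(N^2-N)}$ rather than $c_N$.  You then defer to \cite[\S 2.II.]{Sch} for the sharp constant---which is exactly the black box the paper invokes at the outset.  So the Hadamard detour buys nothing toward the stated proposition.  Two incidental remarks: the centering reduction $\sum_i\beta_i=0$ that you set up is never actually used in the Hadamard step (it would only enter in the refined orthogonal-polynomial argument you gesture at), and the hypothesis $\beta\in\co_F$ plays no role for either approach, since both treat the inequality as a statement about arbitrary real $N$-tuples.
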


\begin{proof}
	Schur's bound \cite[\S 2.II.]{Sch} states that if $x_1,\dots,x_N$ are real numbers such that $x_1^2+\dots+x_N^2\leq 1$, then the discriminant $\prod_{1\leq i<j\leq N}(x_i-x_j)^2\leq c_N^{-(N^2-N)/2}$. Setting $x_i=\sigma_i(\beta)/(\Tr_{F/\Q} \beta^2)^{1/2}$ gives the inequality we need.	
\end{proof}

For an integer $k\geq 2$, we will denote $S_k$ and $A_k$ the symmetric and alternating groups on the set $\{1,2,\dots,k\}$. The cyclic group of order $k$ is $C_k$ (considered multiplicatively).

We will work with extensions of a given number field by an $S_k$-number field, whose existence is given by the following proposition.

\begin{prop}\label{prop:prim}
	Let $D, X, k\geq 2$.
	There are infinitely many totally real number fields $K$ of degree $[K:\Q]=k$ whose 
	discriminant $\disc_K>X$ is coprime with $D$ and whose Galois closure $\widetilde K$ has Galois group $\Gal(\wt K/\Q)\simeq S_k$.
\end{prop}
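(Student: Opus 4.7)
The plan is to apply Hilbert's irreducibility theorem to the generic polynomial of degree $k$, supplemented by archimedean and local conditions at the primes dividing $D$. Consider the generic monic polynomial
$$F(x;\mathbf{t}) \eqdef x^k + t_1 x^{k-1} + \cdots + t_k \in \Z[t_1,\ldots,t_k][x],$$
whose splitting field over $\Q(\mathbf{t})$ has Galois group $S_k$ (the classical fact about the generic polynomial). Hilbert's irreducibility theorem then says that the set of specializations $\mathbf{t}^0 \in \Z^k$ for which $F(x;\mathbf{t}^0)$ is irreducible over $\Q$ with Galois group $S_k$ is the complement of a thin set, and in particular is Zariski-dense in $\Z^k$.

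On top of the Hilbertian condition I would impose two extra constraints on $\mathbf{t}^0$. First, the \emph{total reality} condition that $F(x;\mathbf{t}^0) \in \R[x]$ has $k$ distinct real roots defines a non-empty open subset $U\subset\R^k$ (witnessed by $(x-1)(x-2)\cdots(x-k)$ together with continuity of roots). Second, the condition $\gcd(\disc F(x;\mathbf{t}^0), D)=1$ can be forced locally: for each prime $p\mid D$, the polynomial $\disc F(x;\mathbf{t})\in\Z[\mathbf{t}]$ is non-zero modulo $p$, so the set $V_p\subset \Z_p^k$ on which $\disc F(x;\mathbf{t}^0)\in\Z_p^\times$ is non-empty and open. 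The strong form of Hilbert's irreducibility theorem (see, e.g., Serre, \emph{Topics in Galois Theory}) guarantees that the Hilbertian set remains infinite after intersecting with any non-empty open archimedean condition and any finitely many $p$-adic open conditions; via weak approximation this produces infinitely many $\mathbf{t}^0\in\Z^k$ satisfying all the constraints simultaneously.

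For each such $\mathbf{t}^0$, the field $K=\Q[x]/(F(x;\mathbf{t}^0))$ is totally real of degree $k$, its Galois closure has group $S_k$, and the divisibility $\disc_K\mid\disc F(x;\mathbf{t}^0)$ yields $\gcd(\disc_K, D)=1$. Finally, Hermite's theorem ensures that only finitely many number fields have discriminant at most $X$, so discarding these still leaves infinitely many fields $K$ with $\disc_K>X$. The main potential obstacle is the simultaneous coordination of Hilbert's theorem with the archimedean and $p$-adic open conditions, but this is precisely what the strong form of Hilbert's irreducibility theorem is designed to handle, so the argument amounts to orchestrating standard tools rather than introducing new technical input.
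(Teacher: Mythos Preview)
Your proof is correct and follows essentially the same route as the paper: the paper's proof is really just a pointer to Hilbert's irreducibility theorem (together with citations to stronger results of Kedlaya and Bhargava--Shankar--Wang that are not actually needed here), and you have spelled out in detail how that application goes, including the archimedean and $p$-adic open conditions and the final appeal to Hermite's theorem. The only minor addition worth making explicit is the one-line check that $\disc F(x;\mathbf{t})$ is not identically zero modulo any prime $p$ (which follows since separable degree-$k$ polynomials exist over every $\mathbb F_p$), but you already asserted this.
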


\begin{proof}
	This is well-known. The most straightforward proof is probably using Hilbert's irreducibility theorem 
	(see, e.g., \cite[Theorem 4.2.3]{Kaly}). 
	
	Much more strongly, Kedlaya \cite[Theorem 1.1]{Ke} proved that one can even impose the additional condition that the $\disc_K$ is squarefree.	
	Further, Bhargava, Shankar, and Wang  \cite{BSW} proved that the polynomials $f(x)=x^k+a_1x^{k-1}+\dots+a_{k-1}x+a_k$, whose rupture field $K$ has the required properties (including squarefree $\disc_K$), have positive density	
	 when ordered by $\max\{|a_i|^{1/i}\mid 1\leq i\leq k\}$.
\end{proof}

\section{The Proof}\label{s:gen}

To prove Theorem \ref{main theorem}, we will use the following general theorem that we will then apply to suitable fields $L$ (of degrees $\ell=2,3$).

\begin{theorem}\label{general theorem} Let $k,\ell,m,n$ be positive integers such that $k=3$ or $k\geq 5$. 
	
	Assume that there is a totally real Galois number field $L$ of degree
	$[L:\Q]=\ell$ whose Galois group is $\Gal(L/\Q)\simeq C_\ell$ and that contains elements $a_1,\dots,a_n\in\co_L^+$ such that if an $\co_L$-lattice represents $a_1,\dots,a_n$, then it has rank $\geq m$.
	
	There is $B>0$ (depending on $k,\ell,L,a_i$) with the following property:
	
	For every totally real number field $K$ of degree $[K:\Q]=k$ whose discriminant $\disc_K>B$ is coprime with $\disc_L$ and whose Galois closure $\widetilde K$ has Galois group $\Gal(\wt K/\Q)\simeq S_k$, we have $[KL:\Q]=k\ell$ and
	$$m(KL)\geq m.$$
\end{theorem}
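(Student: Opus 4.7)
The plan is to prove the contrapositive: given any totally positive $\co_{KL}$-lattice $(\Lambda,Q)$ of rank $r$ that represents $a_1,\dots,a_n$, show $r\geq m$. Pick $v_1,\dots,v_n\in\Lambda$ with $Q(v_i)=a_i$, and set $b_{ij}\eqdef 2B(v_i,v_j)\in\co_{KL}$. The Cauchy--Schwarz inequality from Section~\ref{s:2}, applied at each real embedding $\sigma$ of $KL$, gives $\sigma(b_{ij})^2\leq 4\sigma(a_ia_j)$, and summing over embeddings yields
\[
\Tr_{KL/\Q}(b_{ij}^2)\leq 4\Tr_{KL/\Q}(a_ia_j)=4k\Tr_{L/\Q}(a_ia_j),
\]
a constant depending only on $k$, $L$, and $a_i,a_j$. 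This will be the input we feed into Schur's bound.

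The crucial step is to show that the hypothesis $\disc_K>B$ forces $b_{ij}\in L$ for all $i,j$. Coprimality of $\disc_K$ and $\disc_L$ implies linear disjointness of $K$ and $L$ (so $[KL:\Q]=k\ell$) and of $\wt{K}$ and $L$, giving $\Gal(\wt{K}L/\Q)\simeq S_k\times C_\ell$ with $\Gal(\wt{K}L/KL)\simeq S_{k-1}\times\{1\}$. The hypothesis $k=3$ or $k\geq 5$ ensures $S_{k-1}$ is maximal in $S_k$, so the Galois correspondence shows that the subfields of $KL$ are precisely the subfields $L'$ of $L$ and their composita $KL'$; in particular, every subfield of $KL$ not contained in $L$ contains $K$. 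Furthermore, the coprime-discriminants formula gives $\disc_{KL'}=\disc_K^{[L':\Q]}\disc_{L'}^k$ for each such $L'$.

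Now suppose some $b_{ij}\notin L$. Then $\Q(b_{ij})=KL'$ for some $L'\subseteq L$, so $\disc_{KL'}\leq\Delta_{KL'/\Q}(b_{ij})$. Applying Proposition~\ref{prop:schur} in the field $KL'$ and using the inequality $\Tr_{KL'/\Q}(b_{ij}^2)\leq [L':\Q]\Tr_{KL/\Q}(b_{ij}^2)/\ell$ bounds $\disc_K$ above by some constant depending on $k,\ell,L,a_i,a_j,L'$. Taking $B$ to be the maximum of these finitely many constants (over all $L'\subseteq L$ and indices $i,j$), the hypothesis $\disc_K>B$ forces $b_{ij}\in L$ for every $i,j$. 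Then the Gram matrix $A=(B(v_i,v_j))$ has all entries in $L$, and $\Lambda_L\eqdef\sum_i\co_Lv_i\subseteq\Lambda$ becomes a totally positive $\co_L$-lattice (total positivity is inherited since each real embedding of $L$ extends to one of $KL$) that represents $a_1,\dots,a_n$. Its $\co_L$-rank equals $\mathrm{rank}_L A$, which coincides with $\mathrm{rank}_{KL} A$ because the entries of $A$ lie in $L$, and is at most $r$. The hypothesis on $L$ then yields $r\geq m$.

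The main obstacle is the Galois-theoretic step: the constant $B$ must simultaneously rule out $b_{ij}$ generating each of the subfields $KL'$ not contained in $L$. That this is even possible rests on $S_{k-1}$ being maximal in $S_k$ (the hypothesis $k=3$ or $k\geq 5$) together with the discriminant formula for coprime-discriminant composita; without maximality, $KL$ could harbour subfields strictly between $L$ and $KL$ that do not contain $K$, whose discriminants need not grow with $\disc_K$ and would escape the Schur bound.
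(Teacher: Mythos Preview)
Your proposal is correct and follows the same strategy as the paper: bound $\Tr_{KL/\Q}(b_{ij}^2)$ via Cauchy--Schwarz, feed this into Schur's inequality (Proposition~\ref{prop:schur}) to force $b_{ij}\in L$ once $\disc_K$ is large, and then descend the Gram matrix to an $\co_L$-lattice representing the $a_i$.

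One step is slightly underjustified. You write that ``$S_{k-1}$ is maximal in $S_k$, so the Galois correspondence shows that the subfields of $KL$ are precisely the subfields $L'$ of $L$ and their composita $KL'$.'' Maximality alone gives only that the projection $\pi_1(G)$ of an intermediate group $S_{k-1}\times\{1\}\subset G\subset S_k\times C_\ell$ is either $S_{k-1}$ or $S_k$; in the second case you must still exclude a ``twisted'' $G$ not containing $S_k\times\{1\}$. This is easy---for instance, $G\cap(S_k\times\{1\})$ is normal in $G$, and if it were $S_{k-1}\times\{1\}$ then projecting would make $S_{k-1}$ normal in $S_k$, which fails for $k\geq 3$---and the paper handles it by an explicit commutator computation showing $((1k),1)\in G$. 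Note also that $S_{k-1}$ is in fact maximal in $S_k$ for every $k\geq 2$, so the hypothesis $k=3$ or $k\geq 5$ is not what guarantees maximality; rather, it is used (in the paper) when classifying normal subgroups of $S_k$ in the verification that $\wt K\cap L=\Q$, and implicitly to ensure $k\geq 3$ so that $S_{k-1}$ is non-normal. Apart from this wrinkle, your argument and the paper's coincide.
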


\begin{proof}
	Let $L,K,\wt K$ be as in the statement (with $B$ to be specified later).
	
	As $\disc_K$ and $\disc_L$ are coprime, we have $K\cap L=\Q$ (for $\disc_{K\cap L}$ is a common divisor of $\disc_K$ and $\disc_L$ by the formula for the discriminant of a tower of number fields \cite[Corollary III.2.10]{Ne}).
	Thus $[KL:\Q]=k\ell$.
	Let us use Galois theory to describe all subfields $\Q\subset M\subset KL$ (without giving references for all the theorems that we use -- see any good textbook on Abstract Algebra).
	
	\medskip
	
	First, consider $H=\wt K\cap L$. $H$ is a subfield of $L$, and so $\Gal(H/\Q)\simeq C_t$ for some $t\mid\ell$, and $\Gal(\wt K/H)$ is a normal subgroup of $\Gal(\wt K/\Q)\simeq S_k$. The only such subgroups are $S_k$ and $A_k$ (as $k\neq 4$), and so correspondingly, $H=\Q$ or $H=\Q(\sqrt{\disc_K})$. But the latter case is impossible, as
	$\disc_K$ and $\disc_L$ are coprime, and so $\sqrt{\disc_K}\not\in L$.
	
	Thus $\wt K\cap L=\Q$, and so $\wt KL$ is Galois with $$\Gal(\wt KL/\Q)\simeq S_k\times C_\ell.$$
	Further, 
	$$\Gal(\wt KL/KL)\simeq S_{k-1}\times \{1\}$$
	(where we view $S_{k-1}\subset S_k$ as the subgroup of permutations that fix the element $k$, e.g.).
	
	Thus by Galois correspondence, the fields $\Q\subset M\subset KL$ correspond to subgroups
	$$S_k\times C_\ell\supset G\supset S_{k-1}\times \{1\}.$$
	
	\medskip
	
	We claim that for each such subgroup, we have $G\subset S_{k-1}\times C_\ell$ or $G\supset S_k\times \{1\}$.
	 
	For if $G\not\subset S_{k-1}\times C_\ell$, then there is an element $(\sigma,u)\in G$ with $\sigma\not\in S_{k-1}$. By multiplying it by a suitable element $(\tau,1)\in S_{k-1}\times \{1\}\subset G$, we obtain	
	$((1k),u)\in G$  (where $(ij)$ denotes the 2-cycle  in $S_k$ that exchanges $i,j$). 
	
	If the order $o$ of $u$ in $C_\ell$ is odd, then 
	$((1k),1)=((1k),u)^o\in G$. If $o$ is even, then also 
	$$G\ni ((12),1)[((1k),u)((12),1)((1k),u)((1k),u)^{o-2}]((12),1)=((12),1)((2k),1)((12),1)=((1k),1).$$
	Thus $G\supset S_k\times \{1\}$ (as $((1k),1)\in G$ and $G\supset S_{k-1}\times \{1\}$), as we wanted to show.
	
	\medskip
	
	Correspondingly, each intermediate field $\Q\subset M\subset KL$ satisfies 
	$$M\supset K\text{ or } M\subset L.$$
	
	\medskip
	
	Let us finally specify that  
	$$\disc_K>B\text{ for }
	B=\text{max}_M\left(\left(\frac{keT}{\ell c_{ke}}\right)^{(k^2e-k)/2}
	\right),$$ where
	the maximum is taken over all fields $M$ such that $K\subset M\subset KL$,
	$e=[M:K]$,	
	$T=4\max\{\Tr_{L/\Q}(a_ia_j)\mid 1\leq i<j\leq n\}$
	and $c_{ke}$ are the constants from Proposition \ref{prop:schur}. 
	
	\medskip
	
	Let $(\Lambda,Q)$ be a universal $\co_{KL}$-lattice. As $L\subset KL$, the lattice $\Lambda$ represents all the elements $a_1,\dots,a_n\in\co_L^+$; fix vectors $v_i\in \Lambda$ such that $Q(v_i)=a_i$. 
	We will show that $\Lambda$ has rank $\geq m$ by showing that the Gram matrix $(B(v_i,v_j))_{1\leq i,j\leq n}$ corresponding to the vectors $v_i$ has rank $\geq m$.
	
	We have $B(v_i,v_i)=Q(v_i)=a_i$ and let $B(v_i,v_j)=b_{ij}/2$ for all $i\neq j$ and suitable $b_{ij}\in\co_{KL}$.
	We will now show that $b_{ij}\in L$ for all $i\neq j$. 
	
	Assume that this is not the case for some $i\neq j$ and let $M=\Q(b_{ij})$. By the description of possible fields $\Q\subset M\subset KL$ obtained above, we have that $M\supset K$. Let $[M:K]=e$; then $\disc_M\geq\disc_K^e$ (again by the formula for the discriminant of a tower).
	
	As the $\co_{KL}$-lattice $\Lambda$ is totally positive, we have the Cauchy--Schwartz inequality  $4a_ia_j \succeq b_{ij}^2$ (see Section~\ref{s:2}).
	
	Taking traces and applying Proposition \ref{prop:schur} for the field $M$ of degree $[M:\Q]=ke$, 
	we get
	$$kT\geq \Tr_{KL/\Q}(4a_ia_j)\geq \Tr_{KL/\Q}(b_{ij}^2)
	= \frac \ell e \Tr_{M/\Q}(b_{ij}^2)	
	\geq \frac \ell e c_{ke}\Delta_{M/\Q}(b_{ij})^{2/((ke)^2-ke)}.$$
	
	As $b_{ij}$ does not lie in a proper subfield of $M$, we have $\Delta_{M/\Q}(b_{ij})\neq 0$, and so $\Delta_{M/\Q}(b_{ij})\geq \disc_M\geq \disc_K^e>B^e$. Thus
	$$kT>\frac \ell e c_{ke}B^{2/(k^2e-k)},$$
	contradicting the choice of $B$.
	
	We proved that $b_{ij}\in L$ for all $i\neq j$. 
	
	\medskip
	
	Therefore all the entries of the Gram matrix $(B(v_i,v_j))_{1\leq i,j\leq n}$ lie in $L$, and so this matrix corresponds to an $\co_L$-lattice $\Lambda'$ that represents all the elements $a_1,\dots,a_n$ over $L$.
	By the assumption of the theorem, every such lattice has rank $\geq m$.
	
	Accordingly, the Gram matrix $(B(v_i,v_j))_{1\leq i,j\leq n}$ has rank $\geq m$, which 
	finally implies that the rank of $\Lambda$ is also $\geq m$.	
\end{proof}

We can now finally use the preceding result to prove our main theorem.

\begin{theorem*}[Theorem \ref{main theorem}, lattice-theoretic formulation] Let $d,m$ be positive integers such that $d$ is divisible by $2$ or $3$.
	Then there are infinitely many totally real number fields $F$ of degree $[F:\Q]=d$ over which every totally positive universal quadratic $\co_F$-lattice has rank at least $m$, i.e., $m(F)\geq m$.
\end{theorem*}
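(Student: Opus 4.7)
The plan is to reduce to Theorem \ref{general theorem} by factoring $d = k\ell$ with $\ell \in \{2,3\}$ and $k$ satisfying $k = 3$ or $k \geq 5$, taking $L$ to be a totally real cyclic number field of degree $\ell$, and setting $F = KL$ for a field $K$ of degree $k$ supplied by Proposition \ref{prop:prim}. Such a factorization is available whenever $d$ is divisible by $2$ or $3$, apart from a handful of small exceptional degrees: for even $d \geq 10$ take $(\ell,k) = (2, d/2)$; for $d = 6$ take $(2,3)$ and for $d = 12$ take $(2,6)$; for $d$ divisible by $3$ with $d \geq 15$ take $(3, d/3)$; for $d = 9$ take $(3,3)$. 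The residual degrees $d \in \{2, 3, 4, 8\}$ are already covered by the earlier results cited in the introduction, namely \cite{Ka}, \cite{Ya}, and \cite{KS} (the last via $4 = 2^2$ and $8 = 2^3$).

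Next I supply, for the given $m$, the field $L$ together with the elements $a_1, \ldots, a_n \in \co_L^+$ required by Theorem \ref{general theorem}. For $\ell = 2$ the continued-fraction construction of \cite{Ka} produces a real quadratic $L$ and totally positive indecomposable integers whose traces are so constrained that any $\co_L$-lattice representing all of them must have rank at least $m$; this is precisely the substance behind the lower bound $m(L) \geq m$ proved there. For $\ell = 3$ the argument of \cite{Ya} does the same inside a simplest cubic field \cite{Sh}, using trace-one elements and interlacing polynomials; such a field is Galois with $\Gal(L/\Q) \simeq C_3$. In both cases $L$ satisfies the hypotheses imposed on it in Theorem \ref{general theorem}.

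Finally I apply Theorem \ref{general theorem} to obtain the constant $B = B(k,\ell,L,a_i)$, then Proposition \ref{prop:prim} (with $D = \disc_L$ and $X = B$) to obtain infinitely many totally real $K$ of degree $k$ with $\disc_K > B$ coprime to $\disc_L$ and $\Gal(\widetilde K/\Q) \simeq S_k$. For each such $K$, Theorem \ref{general theorem} yields $[KL:\Q] = k\ell = d$ and $m(KL) \geq m$, so $F \eqdef KL$ has the desired properties. The discriminant tower formula forces $\disc_F$ to be divisible by a fixed positive power of $\disc_K$, so $\disc_F \to \infty$ along this sequence and one obtains infinitely many distinct $F$ of degree $d$. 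The only delicate point I foresee is confirming that the proofs in \cite{Ka, Ya} really do produce explicit elements $a_i$ with the rank-forcing property, and not merely the final inequality $m(L) \geq m$; inspection of those arguments shows the elements are exactly what is constructed, so no substantial new work is needed.
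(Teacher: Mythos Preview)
Your proposal is correct and follows the paper's route exactly: dispose of $d\in\{2,3,4,8\}$ via \cite{Ka,KS,Ya}, then for the remaining $d$ factor $d=k\ell$ with $\ell\in\{2,3\}$ and $k\in\{3\}\cup\{5,6,\dots\}$, supply a cyclic field $L$ of degree $\ell$ carrying rank-forcing elements, and feed this into Theorem~\ref{general theorem} together with Proposition~\ref{prop:prim}. The one deviation is that for $\ell=3$ the paper draws the elements in simplest cubic fields from Kala--Tinkov\'a \cite[Subsection~7.2 and Proposition~7.4]{KT} (yielding rank $\geq\sqrt{n}/3$) rather than from \cite{Ya}; this is the cleaner choice, since \cite{KT} works directly in these $C_3$-fields, whereas invoking \cite{Ya} as you do requires the extra verification that infinitely many simplest cubic fields satisfy Yatsyna's Condition~(A).
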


\begin{proof}%[Proof of Theorem \ref{main theorem}]
	For $d=2,4,8$, this was proved by Kala--Svoboda \cite[Theorem 1]{KS}, and for $d=3$ by Yatsyna \cite[Theorem 5]{Ya}.
	
	\medskip
	
	If $d=6$ or $d\geq 10$ is even, choose $\ell=2$ and $k=d/2$; we have $k=3$ or $k\geq 5$.
	
	By \cite[Section 4]{Ka}, there are (infinitely many) real quadratic fields $L=\Q(\sqrt D)$ that contain $n=m$ elements ($\alpha_1,\alpha_3,\dots,\alpha_{2M+1}$ in the notation of \cite{Ka} for $M=m-1$) such that their corresponding Gram matrix is diagonal by \cite[Proposition 4.1]{Ka}, and so every $\co_K$-lattice that represents these elements has rank $\geq m$.
	
	By Proposition \ref{prop:prim}, there are infinitely many fields $K$ of degree $k=d/2$ with the properties required by Theorem \ref{general theorem}, and so for each of them we have $[KL:\Q]=d$ and $m(KL)\geq m$, as needed.
	
	\medskip 
	
	If $d=9$ or $d\geq 15$ is divisible by $3$, choose $\ell=3$ and $k=d/3$; again $k=3$ or $k\geq 5$.
	
	Let $n=\max(9m^2,240)$ and let us consider Shanks' \textit{simplest cubic fields} \cite{Sh} $L=\Q(\rho)$ (where $\rho$ is a root of the polynomial $x^3-ax^2-(a+3)x-1$ for some $a\in\Z_{\geq -1}$). 
	Each simplest cubic field $L$ is Galois with $\Gal(L/\Q)\simeq C_3$.
	
	Kala--Tinkov\' a \cite[Subsection 7.2]{KT} proved that there are (infinitely many) such fields $L$ that contain $n$ elements $a_1, a_2, \dots, a_n\in\mathcal O_L^+$ and an element $\delta\succ 0$ in the codifferent $\{\alpha\in L\mid \Tr_{L/\Q}(\alpha\beta)\in\Z\text{ for all }\beta\in\co_L\}$ with $\Tr_{L/\Q}(\delta a_i)=1$ for all $i$. 
	By \cite[Subsection 7.2 and proof of Proposition 7.4]{KT}, if an $\co_L$-lattice represents all the elements $a_1,\dots,a_n$, then it has rank $\geq \sqrt {n}/3=m$.
	
	It again just remains to use Proposition \ref{prop:prim} and Theorem \ref{general theorem}.
	
	\medskip

	This covers all the positive integers $d$ that are divisible by $2$ or $3$, finishing the proof.
\end{proof}

It is unfortunate that Theorem \ref{general theorem} assumes the existence of suitable elements $a_1,\dots,a_n$, instead of directly claiming that $m(KL)\geq m(L)$.
Such a cleaner version of Theorem \ref{general theorem} would easily follow
from the following folklore result, whose proof unfortunately does not appear in the literature, so we state it here only as a conjecture:

\begin{conj}\label{prop:weak 290}
	Let $F$ be a totally real number field. There is a finite subset $S\subset \co_F^+$ such that if an $\co_F$-lattice represents all the elements of $S$, then it is universal.	
\end{conj}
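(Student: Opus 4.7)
The plan is to adapt the escalator method of Bhargava--Hanke for the 15- and 290-theorems to the setting of a totally real number field $F$. Well-order $\co_F^+$ conveniently (say, by trace with ties broken arbitrarily) and build the \emph{escalator tree}: start with the zero lattice; at each node corresponding to a lattice $\Lambda$, let $a\in\co_F^+$ be the smallest element not yet represented, and for each choice of Gram extension realizing $Q(v)=a$ compatibly with Cauchy--Schwarz create a child lattice $\Lambda\oplus\co_F v$. If the tree is finite, then the set $S$ collecting all missed elements $a$ appearing along the root-to-leaf paths is finite, and any $\co_F$-lattice representing every element of $S$ admits an initial segment of escalation agreeing with a universal node, hence is itself universal.

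To see that the tree is finite, split by the rank $r$ of the current node. In the \emph{low-rank regime} $r\le r_0$ (for a threshold $r_0$ to be chosen), invoke reduction theory for positive definite $\co_F$-lattices: the generators $v_1,\dots,v_r$ satisfy $\Tr_{F/\Q} Q(v_i)\le\Tr_{F/\Q}(a_{\mathrm{root}})$ by Cauchy--Schwarz combined with how they entered the escalator, so they fall into only finitely many $\co_F$-isomorphism classes of Gram matrices. In the \emph{high-rank regime} $r>r_0$, use the theta series $\theta_\Lambda$ as a Hilbert modular form of parallel weight $r/2$ and decompose $\theta_\Lambda=E_\Lambda+C_\Lambda$ into its Eisenstein and cuspidal parts. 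Siegel's mass formula gives a lower bound on the $\nu$-th Fourier coefficient of $E_\Lambda$ of order $|N_{F/\Q}(\nu)|^{r/2-1}$ on the square classes satisfying the local representability conditions (the only obstructions to representation by the genus, by \cite{HKK}), while Deligne/Ramanujan-type bounds on Hilbert cusp forms bound the $\nu$-th coefficient of $C_\Lambda$ by $O(|N_{F/\Q}(\nu)|^{r/4+\varepsilon})$. Choosing $r_0=4$ (so $r\ge 5$) the Eisenstein lower bound dominates for all but finitely many $\nu$ in each square class, so each high-rank escalator leaf represents every totally positive element outside a finite computable set; those exceptions are added to $S$.

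The principal obstacle is obtaining the required \emph{uniformity} of the high-rank estimate across the infinite family of lattices (with varying discriminant, level, and square-class structure) that can arise during escalation. In the $\Z$-setting, Bhargava--Hanke make the bounds uniform by pairing Siegel's explicit Eisenstein computation with an effective cusp-form estimate tied to the Gram structure. Over a general totally real $F$ the analogous Hilbert modular machinery exists in pieces---Shimura's theory of half-integral weight forms, mass formulas for quadratic lattices over $\co_F$, and Hilbert cusp coefficient bounds due to Blomer--Harcos and their successors---but it has not been assembled into the required uniform statement. Doing so, while also handling the half-integral weight case carefully enough to cover non-classical lattices, is exactly why this folklore statement currently remains a conjecture.
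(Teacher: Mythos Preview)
There is nothing in the paper to compare against: the statement is explicitly labeled a \emph{Conjecture}, and the paper does not attempt a proof. The surrounding text notes only that over $\Z$ this is the weak 290-theorem, that Kim--Kim--Oh \cite{KKO} proved a related finiteness result over $\Z$ and remarked it should extend to number fields, and that a proof ``should eventually appear'' in an as-yet-unpublished manuscript \cite{S}. So you were asked to prove something the paper itself declines to prove; your sketch of the escalator method together with the Eisenstein/cusp-form comparison for Hilbert modular theta series is the expected strategy, and the obstacle you isolate---uniformity of the analytic bounds across the whole escalator family, including the half-integral-weight cases---is exactly why the paper leaves the statement open.

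One local correction to your sketch: the inequality $\Tr_{F/\Q} Q(v_i)\le\Tr_{F/\Q}(a_{\mathrm{root}})$ is not what Cauchy--Schwarz gives you. Each $v_i$ enters the tree with $Q(v_i)$ equal to the current truant, and later truants generally have larger trace than the root truant. Finiteness of the low-rank part of the tree instead follows by induction on the rank: at each node the possible cross-terms $B(v,v_j)$ of a new escalation vector lie in a finite subset of $\tfrac12\co_F$ (this is where Cauchy--Schwarz, applied in every real embedding, is actually used), so the tree is finitely branching, and combined with the high-rank termination one gets a finite tree. This does not rescue the overall argument, since the high-rank termination is precisely the unproven uniform estimate you already flagged.
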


Over the rationals $\co_F=\Z$ this is of course well-known, as it is just a weak version of the famous 290-theorem \cite{BH}.
Even more generally, Kim--Kim--Oh \cite{KKO} proved a similar result for representations of quadratic forms by quadratic forms (over $\Z$), and remarked that their theorem should also hold over number fields.
It seems that this is indeed the case; unfortunately the general proof would be somewhat lengthy, and so we chose not to include it in this short note.
A proof of a more general result than Conjecture \ref{prop:weak 290} should eventually appear in \cite{S}, but is unpublished yet.

Also note that Theorem \ref{general theorem} likely also holds with different Galois groups than $C_\ell$ and $S_k$ (for example, \cite{KS} dealt with multiquadratic fields $F$, i.e., $\Gal(F/\Q)\simeq C_2^h$). However, the present formulation is sufficient for the proof of our main Theorem \ref{main theorem}, and a more general statement probably would not bring more clarity.

\medskip

To conclude, let us briefly discuss a way of extending Theorem \ref{main theorem} to all degrees $d>1$. 
Let $p\geq 5$ be a prime, and assume that there are infinitely many totally real cyclic fields $L$ of degree $p$ that have a power integral basis and units of all signatures. Each such field is primitive (i.e., has no proper subfield) and satisfies Yatsyna's ``Condition (A)'' \cite{Ya}. 
For each $m$, by \cite[proof of Theorem 4]{Ya} all but finitely many of these fields contain elements $a_1,\dots,a_n$ such that every quadratic $\co_L$-lattice representing these elements has rank $\geq m$. Thus we can apply Theorem \ref{general theorem} to any of these infinitely many suitable fields to conclude that $m(KL)\geq m$ for some field $K$ with $[KL:\Q]=kp, k=3$ or $k\geq 5$. Knowing this for all primes $p\geq 5$, together with our Theorem \ref{main theorem}, we cover all the degrees $d>1$.

Unfortunately, while there probably indeed are infinitely many fields satisfying our assumption, this appears not to have been proven yet (and is perhaps hard).

\end{document}